\begin{document}

\newtheorem{theorem}{Theorem} [section]
\newtheorem{conjecture}[theorem]{Conjecture}
\newtheorem{tha}{Theorem}
\renewcommand{\thetha}{\Alph{tha}}
\newtheorem{corollary}[theorem]{Corollary}
\newtheorem{lemma}[theorem]{Lemma}
\newtheorem{proposition}[theorem]{Proposition}
\newtheorem{construction}[theorem]{Construction}
\newtheorem{question}[theorem]{Question}
\newtheorem{definition}[theorem]{Definition}
\newtheorem{observation}{Observation}
\newtheorem{remark}[theorem]{Remark}
\newtheorem{fact}[theorem]{Fact}
\setlength{\textwidth}{17cm}
\setlength{\oddsidemargin}{-0.1 in}
\setlength{\evensidemargin}{-0.1 in}
\setlength{\topmargin}{0.0 in}

\def \df {\noindent {\bf Definition. }}

\newcommand{\dist}{{\rm dist}}
\newcommand{\Dist}{{\rm Dist}}
\newcommand{\chib}{\chi_B}
\newcommand{\Forb}{{\rm Forb}}

\newcommand{\qedbox}{$\blacksquare$ \newline}
\newenvironment{proof}%
{%
\noindent{\it Proof.} } {
\hfill\qedbox }

\newcommand{\proofend}{\hfill\qedbox}

\def\qed{\hskip 1.3em\hfill\rule{6pt}{6pt} \vskip 20pt}

\linespread{1.0}
\input epsf
\def\epsfsize#1#2{0.5#1\relax}
\def\O{\text{O}}
\def\o{\text{o}}
\def\ex{\text{ex}}
\def\Z{\mathbb Z}

\def\fl#1{\lfloor #1 \rfloor}
\def\ce#1{\lceil #1 \rceil}

\def \cH{{\cal H }}
\def \cF{{\cal F}}
\def \cG{{\cal G}}
\def \cQ{{\cal Q}}
\def \cA{{\cal A}}
\def \cD{{\cal D}}

\def \f {{\cal F }}
\def \A {{\cal A}}
\def \D {{\cal D}}
\def \fn2 {{\lfloor n/2 \rfloor}}
\def \cn2 {{\lceil  n/2 \rceil}}

\newcommand{\bin}[2]{{#1\choose #2}}
\newcommand{\comp}{\overline}
\newcommand{\exval}{{\rm ex}}

\title{A note on short cycles  in a hypercube}
\author{Maria Axenovich\thanks{Department of Mathematics, Iowa
State University, Ames, IA 50011, {\tt
axenovic@math.iastate.edu}}\and Ryan Martin\thanks{%
Department of
Mathematics, Iowa State University, Ames, IA 50011, {\tt
rymartin@iastate.edu}}}

\date{}
\maketitle

\begin{abstract}
How many edges can a quadrilateral-free subgraph of a hypercube
have? This question was raised by Paul Erd\H{o}s about $27$ years
ago. His conjecture that such a subgraph asymptotically  has at most
half the edges of a hypercube is still unresolved. Let  $f(n,C_l)$
be the largest number of edges in a subgraph of a  hypercube $Q_n$
containing no cycle of length $l$. It is known that $f(n, C_l) = o
(|E(Q_n)|)$, when $l= 4k$, $k\geq 2$ and that $f(n, C_6) \geq
\frac{1}{3} |E(Q_n)|$. It is an open question
to determine $f(n, C_l)$  for $l=4k+2$, $k\geq 2$.  Here,
we give a general upper bound for $f(n,C_l)$ when $l=4k+2$ and
provide  a coloring of $E(Q_n)$ by $4$ colors containing no
induced monochromatic $C_{10}$.

\end{abstract}

\section{Introduction}
Let $Q_n$ be a hypercube of dimension $n$. We treat its vertices
as binary sequences of length $n$ or as subsets of a set $[n]=\{1,
2, \ldots, n\}$, whichever  is more convenient. The edges  of $Q_n$
correspond to pairs of sets with symmetric difference of size $1$
or, equivalently, to pairs of sequences with Hamming distance $1$.
We denote the set of subsets of $[n]$  of size $k$ by $\binom{[n]}{k}$. We
say that a subset of edges forms an $i^{\rm th}$ edge layer of a
hypercube if these  edges join vertices in $\binom{[n]}{i}$ and
$\binom {[n]}{i+1}$. It is a classical question to find a dense
subgraph of a hypercube without cycles of a certain fixed length.
Moreover, it is of interest to consider the Ramsey properties of
cycles in $Q_n$. We say that a cycle $C_l$ has a {\it Ramsey
property} in $Q_n$, if for a every  $k$, there is an
$N_0$ such that if $n>N_0$ and the edges of $Q_n$ are colored in
$k$ colors then there is always a monochromatic $C_l$ in a such
coloring.

It is easy to see that there is a coloring of $E(Q_n)$ in two
colors with no monochromatic $C_4$. Indeed, color all edges in
each edge-layer in the same color, using two alternating colors on
successive layers. Each color class is a  quadrilateral-free subgraph of
$Q_n$, with the larger one  having at least $\frac{1}{2} |E(Q_n)|$ edges. There is a
coloring by Conder \cite{Co} using three colors on $E(Q_n)$ and
containing no monochromatic  $C_6$, in particular providing a
hexagon-free subgraph of $Q_n$ with at least $|E(Q_n)|/3$ edges.
This shows that both $C_4$ and $C_6$ do not have Ramsey property
in a hypercube.

In general, if $f(n, C_l)$ is the largest number of edges in a
subgraph of $Q_n$ with no cycle of length $l$ then the following
facts are known: Fan Chung \cite{C} proved that $f(n, C_4)\leq .623|E(Q_n)|$. The conjecture of Erd\H{o}s that $f(n, C_4) =
(1/2+o(1))|E(Q_n)|$ is still open. Chung, \cite{C} also gave the
following upper bound for cycles of length $0$ modulo $4$: $f(n,
C_{4k})\leq  cn^{(1/2k)-(1/2)}|E(Q_n)|$, $k\geq 2$.

Thus, $C_{4k}$ has the Ramsey property for $k\geq 2$. In \cite{C},
Chung also raised the question about Ramsey properties of cycles
$C_{4k+2}$, $k\geq 2$, in particular about $C_{10}$. Recently,
Alon, Radoi\u{c}i\'{c}, Sudakov and Vondr\'{a}k, \cite{ARSV},
completely settled the problem about Ramsey properties of cycles
in the hypercube by proving that for any $l\geq 5$, $C_{2l}$ has a
Ramsey property. In this note, we investigate the Ramsey
properties of induced cycles in the hypercube and show that
$C_{10}$, as an induced subgraph of $Q_n$, does not have the
Ramsey property.

\begin{theorem}\label{construction}
There is a  coloring  of $E(Q_n)$ using $4$ colors  such that there is no induced monochromatic
$C_4$, $C_6$, or $C_{10}$.
\end{theorem}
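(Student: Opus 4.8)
\medskip
\noindent\emph{A plan.}
The plan is to write down an explicit $4$-colouring $c$ of $E(Q_n)$ with values in $\mathbb{Z}_2\times\mathbb{Z}_2$ and to verify, colour class by colour class, that it contains no induced $C_4$, $C_6$ or $C_{10}$. I would take $c=(\chi_1,\chi_2)$, a product of two $2$-colourings: $\chi_1$ is constant on edge layers and alternates on consecutive ones, so an edge in the $i$-th edge layer gets first coordinate $i\bmod 2$; and $\chi_2$ depends on the direction of an edge together with its lower endpoint --- writing an edge as $\{S,S\cup\{i\}\}$ with $i\notin S$, set
$\chi_2(\{S,S\cup\{i\}\})=\bigl|\,S\cap\{1,\dots,i-1\}\,\bigr|\bmod 2$,
a parity in the spirit of Conder's colouring. (If this precise $\chi_2$ does not quite suffice in the $C_{10}$ case, one replaces it by another $\mathbb{Z}_2$-valued function, affine-linear in the coordinates of $S$ with coefficients depending on $i$; only the way $\chi_2$ separates edges of one common edge layer is used.)

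\medskip
\noindent\emph{Structural reductions.}
Two elementary facts organise the argument. (a) A cycle monochromatic for $\chi_1$ lies inside a single edge layer: a cycle using two edge layers of equal parity, and hence differing by at least $2$, must --- being a closed $\pm1$ walk on the vertex levels that reaches both of them --- also traverse an intermediate edge layer, necessarily of the other parity. (b) No single edge layer contains a $C_4$, since the bipartite containment graph between $\binom{[n]}{\ell}$ and $\binom{[n]}{\ell+1}$ has girth $6$. By (a) and (b), $\chi_1$ by itself already rules out every $C_4$; and any $C_6$ or $C_{10}$ monochromatic for $c$ is, by (a), contained in a single edge layer. It is also convenient to recall --- via the coil-in-the-box bounds, the longest induced cycle of $Q_2$, $Q_3$, $Q_4$ having length $4$, $6$, $8$ --- that an induced $C_6$ or $C_{10}$ uses exactly $3$, respectively $5$, directions, each exactly twice.

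\medskip
\noindent\emph{Inside one edge layer; the main difficulty.}
It remains to show that no induced $C_6$ and no induced $C_{10}$ lying in a single edge layer is monochromatic for $\chi_2$. A hexagon in the edge layer between levels $\ell$ and $\ell+1$ is necessarily of the form $X\cup\{p\},\ X\cup\{p,q\},\ X\cup\{q\},\ X\cup\{q,r\},\ X\cup\{r\},\ X\cup\{p,r\}$ with $X\in\binom{[n]}{\ell-1}$ and distinct $p,q,r\notin X$, and is automatically induced; its two edges in direction $q$ have lower endpoints $X\cup\{p\}$ and $X\cup\{r\}$, so $\chi_2$ tells them apart unless $p$ and $r$ lie on the same side of $q$ in the order on $[n]$, and the three conditions obtained this way (for $q$, for $p$, for $r$) cannot hold simultaneously, since the median of $\{p,q,r\}$ has the other two on opposite sides. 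A $C_{10}$ in one edge layer is given by $A_1,\dots,A_5\in\binom{[n]}{\ell}$ together with $B_i=A_i\cup A_{i+1}$ (indices mod $5$), and --- unlike for hexagons --- being induced is now a genuine restriction, namely $A_i\not\subset B_j$ whenever $j\neq i-1,i$. Writing $A_{i+1}=(A_i\setminus\{b_i\})\cup\{a_i\}$, the ten edges are ``$a_i$ at $A_i$'' and ``$b_i$ at $A_{i+1}$''; the plan is to convert the no-chord conditions into order relations among the $a_i$, the $b_i$ and the members of the $A_i$, and then, supposing all ten values of $\chi_2$ to agree, to derive a contradiction by pinpointing two edges --- for instance the pair carrying a common direction --- whose $\chi_2$-values are forced to differ. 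This $C_{10}$ step is the crux of the proof and its only laborious part: one must understand precisely how being induced constrains the cyclic pattern of insertions and deletions in a single edge layer (equivalently, the chord diagram of the five directions involved), and it is here that $\chi_2$ must be tuned to the structure. The $C_4$ and $C_6$ cases, and the reduction via $\chi_1$, are routine.
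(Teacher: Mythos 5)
Your colouring is in fact identical to the paper's: writing an edge as $\{S,S\cup\{i\}\}$, the quantity $|S\cap\{1,\dots,i-1\}|$ is exactly the weight $w(p)$ of the common prefix, so your $(\chi_1,\chi_2)$ is the paper's $(c_1,c_2)$. Your reduction via $\chi_1$ to a single edge layer, the disposal of $C_4$, and the three-directions argument for $C_6$ are all correct and match the paper (your observation that an induced $C_{10}$ must use exactly five directions, each twice, via the coil-in-the-box bound for $Q_4$, is a clean alternative to the paper's counting argument in its Lemma 2.2).

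However, the $C_{10}$ case is a genuine gap, not a proof. You correctly identify it as ``the crux and only laborious part,'' but then offer only a plan (``convert the no-chord conditions into order relations \dots and derive a contradiction''), and you even hedge that $\chi_2$ might need to be replaced by a different affine-linear function. No contradiction is actually derived, and it is not at all evident that one pair of same-direction edges must disagree: unlike the hexagon case, the five insertion positions $a_i$ and deletion positions $b_i$ interact with all of the $A_i$, and the paper needs substantial machinery to close this case --- a ``bad prefixes'' lemma exploiting the reversal symmetry of the colouring (its Observation 1 and Lemma 2.3), a proof that each row of the $5\times 5$ incidence matrix ${\bf A}(C)$ of the five directions has exactly two $1$'s (Lemma 2.4), a lemma converting certain $3\times 2$ all-zero submatrices into chords of $C$ (Lemma 2.5, which is where inducedness is actually used), and a final case analysis over the remaining column distributions. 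None of this, nor any substitute for it, appears in your write-up, so the theorem is not established for $C_{10}$. For what it is worth, the paper confirms that your exact $\chi_2$ does suffice, so the missing step is ``only'' the verification --- but that verification is the bulk of the paper.
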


We prove this theorem and  describe some properties of the corresponding  coloring
which are of independent interest in Section \ref{proof}.
For completeness, we  provide a general upper
bound on the maximum number of edges in a subgraph of a hypercube
containing no cycle of length $C_{4k+2}$ in Section \ref{UB}.  For the general
graph-theoretic definitions we refer the reader to \cite{W}.

\section{A coloring with no induced monochromatic  $C_{10}$}\label{proof}

For a binary sequence (or binary word) $x$, let $w(x)$ be the weight (or number of
$1$'s) in $x$.  If the vertices corresponding to the binary
sequences $x$ and $y$ form an edge in $Q_n$ (i.e., they differ in
exactly one position) let the common substring that precedes the
change be called the common {\it prefix} and be denoted $p=p(xy)$.

\noindent
{\bf Description of a coloring $c$}\\
\begin{quote}
For an edge $xy$, with $x\in\bin{[n]}{k}$, $y\in\bin{[n]}{k+1}$
and having  common prefix $p=p(xy)$,   let $c(xy)=(c_1,c_2)$, where
$$ c_1\equiv k\pmod{2},\qquad c_2\equiv w(p)\pmod{2} . $$
\end{quote}

\subsection{Proof of Theorem \ref{construction}}
We shall prove that the coloring $c$ does not produce induced monochromatic cycles of length $4$, $6$, or $10$.
Since the sets of colors used on consecutive edge-layers of $Q_n$
are disjoint, the monochromatic cycles can occur only within
edge-layers. There is no $C_4$ within an edge-layer, so there is
no monochromatic $C_4$.

Let $C$ be a monochromatic cycle in $Q_n$ under coloring $c$. Then
$C=x_1,y_1, \ldots, x_m,y_m, x_1$, where $x_i \in \binom{[n]}{k}$,
$y_i \in \binom{[n]}{k+1}$, for $i=1, \ldots, m$ and for some $k\in
\{1, \ldots, n-1\}$. Note also that $y_i=x_i\cup x_{i+1}$, for $i=1,
\ldots, m$ where addition is taken modulo $m$. In particular, we
have that ${\rm dist}(x_i, x_{i+1})=2$ and ${\rm
dist}(y_i,y_{i+1})=2$, for  $i=1, \ldots, m$.

In the following lemma, we settle the case with $C_6$ mostly to
illustrate the properties of the coloring.

\begin{lemma}
There is no monochromatic $C_6$ in $Q_n$ under coloring $c$.
\end{lemma}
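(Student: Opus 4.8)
The plan is to suppose for contradiction that there is a monochromatic $C_6$, written as $C = x_1, y_1, x_2, y_2, x_3, y_3, x_1$ with all $x_i \in \binom{[n]}{k}$, all $y_i \in \binom{[n]}{k+1}$, and $y_i = x_i \cup x_{i+1}$ (indices mod $3$). Since the cycle is monochromatic in the first coordinate $c_1$, all six edges lie in one edge-layer, which is already encoded in the structure above; the real leverage comes from the second coordinate $c_2 \equiv w(p)\pmod 2$. So the first step is to pin down what $p(x_iy_i)$ actually is: since $y_i = x_i \cup x_{i+1}$ is obtained from $x_i$ by adding the unique element of $x_{i+1}\setminus x_i$, the prefix $p(x_iy_i)$ is the initial segment of $x_i$ (equivalently of $y_i$) strictly before that added coordinate. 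I would introduce notation for the symmetric-difference elements: write $x_{i+1}\setminus x_i = \{b_i\}$ and $x_i \setminus x_{i+1} = \{a_i\}$, so that moving around the cycle we alternately add $b_i$ and delete $a_i$. Then $w(p(x_iy_i))$ counts the elements of $x_i$ smaller than $b_i$.

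The key combinatorial step is to exploit the closure condition $\sum (\text{added}) = \sum(\text{deleted})$ around the cycle, i.e. the multiset $\{a_1, a_2, a_3\}$ equals the multiset $\{b_1, b_2, b_3\}$, since returning to $x_1$ means the net change is zero and at this length there is no room for an element to be added and deleted more than once in a nontrivial way. I would analyze the small number of combinatorial patterns for how the six coordinates $a_1,b_1,a_2,b_2,a_3,b_3$ can coincide (for $C_6$ there are essentially only a couple of cases, e.g. $a_i = b_{i-1}$ for all $i$, versus degenerate overlaps that force two $x_i$'s to coincide or force a shorter cycle). For each surviving pattern, I would compute $w(p(x_iy_i)) \bmod 2$ directly in terms of the relative order of the elements $a_j, b_j$ and the fixed "background" set common to all $x_i$, and show that the three values $c_2(x_1y_1), c_2(x_2y_2), c_2(x_3y_3)$ cannot all be equal — the parities are tied together by an odd number of order comparisons that must sum inconsistently. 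Concretely, I expect that going around the hexagon the quantity $w(p)$ changes by $\pm 1$ (mod $2$) an odd number of times, contradicting monochromaticity.

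I would organize the parity bookkeeping by comparing consecutive prefixes: from $x_iy_i$ to $x_{i+1}y_{i+1}$ the underlying vertex changes in a controlled way (delete $a_i$, add $b_{i+1}$ or similar), and the "cut point" defining the prefix moves, so $w(p(x_{i+1}y_{i+1})) - w(p(x_iy_i))$ is determined by whether the coordinates being inserted/removed lie to the left or right of the relevant cut points. Summing these differences around the cycle gives $0$, but a parity/counting argument on the order relations among $a_1,a_2,a_3$ (which, recall, are a permutation of $b_1,b_2,b_3$) forces the sum of the individual $\bmod\ 2$ contributions to be odd. Hence not all $c_2$ values agree, the contradiction we want.

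The main obstacle I anticipate is the case analysis on how the symmetric-difference elements overlap and, within each case, keeping the order comparisons straight: $w(p)$ depends delicately on the linear order of $[n]$, and one has to be careful that "degenerate" configurations (two of the $x_i$ equal, or the cycle not being induced, or an $a$ equal to a $b$ in the same step) are either impossible for a genuine $6$-cycle or are handled separately. I would first dispose of these degeneracies using $\dist(x_i,x_{i+1}) = 2$ and the fact that $C$ is a cycle (not a shorter closed walk), and only then carry out the parity computation on the one or two remaining generic patterns.
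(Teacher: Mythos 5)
Your setup is sound and matches the paper's: a monochromatic $C_6$ forces three distinguished positions $i_1<i_2<i_3$ with a common background, each $x_i$ carrying a single $1$ among them, and the deleted/added elements $a_i,b_i$ forming matching multisets (in fact the only surviving pattern is $a_i=b_{i-1}$ up to relabeling, so the case analysis you worry about is nearly vacuous). The problem is the contradiction mechanism you propose. If you track the parity of $w(p)$ over all six edges in cyclic order, the number of parity changes around a closed cycle is \emph{always even} (the consecutive differences telescope to $0$), so your claim that it "changes an odd number of times" is not just unproven but false; what you actually need is that the number of changes is \emph{positive}, i.e.\ that some two edges disagree. Worse, if you only track the three forward edges $x_iy_i$ (which is what your notation and your "from $x_iy_i$ to $x_{i+1}y_{i+1}$" bookkeeping suggest), the argument fails outright: writing $x_1=a_01a_10a_20a_3$, $x_2=a_00a_11a_20a_3$, $x_3=a_00a_10a_21a_3$ and $\beta=w(a_1)$, $\gamma=w(a_2)$, the three prefix weights of $x_1y_1,x_2y_2,x_3y_3$ are $\alpha+\beta+1$, $\alpha+\beta+\gamma+1$, $\alpha$, and these all agree mod $2$ when $\beta$ is odd and $\gamma$ is even. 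So a contradiction cannot come from those three edges alone; you must bring in the edges $y_ix_{i+1}$.

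The paper's proof is a one-line version of the computation you defer: among the six edges, $x_1y_1$ has common prefix $a_01a_1$ while $y_2x_3$ has common prefix $a_00a_1$; these differ in a single bit, hence have different weights mod $2$, so the cycle is not monochromatic. Your framework (expressing each $w(p)$ in terms of $\alpha,\beta,\gamma$ and the cut points) would reach the same conclusion once all six edges are written down, but as it stands the proposal both omits that computation and rests its intended contradiction on a parity statement that cannot hold.
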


\begin{proof}
Let $C=x_1y_1x_2y_2x_3y_3x_1$ be a monochromatic $C_6$. Then
$x_i$'s can be written as  $a_0 1 a_1 0a_2 0a_3$, $a_0 0 a_1 1a_2
0a_3$, $a_0 0 a_1 0a_2 1a_3$, for some binary words $a_0, \ldots,
a_3$. The corresponding $y_i$'s are $a_0 1 a_1 1 a_2 0a_3$, $a_0 0
a_1 1  a_2 1 a_3$, $a_0 1 a_1 0a_2 1 a_3$. Therefore, the set of
common prefixes on the edges of $C$ contains $a_0 0a_1$ and $a_0 1
a_1$. These two prefixes have different weights modulo $2$, thus
$C$ is not monochromatic under coloring $c$.
\end{proof}
\noindent
For a word ${\bf u}=u_1u_2\cdots u_n$, let the reverse of
${\bf u}$  be  ${\overline{{\bf u}}}=u_nu_{n-1}\cdots u_1$. \\

\noindent {\it Observation 1.} Let $x, x' \in \binom{[n]}{k}$ and
$y, y' \in \binom{[n]}{k+1}$. Let $xy$, $x'y'$  be  edges of $Q_n$
such that $x$ and $y$  have  a common prefix $p$ and a common
suffix $s$ and $x'$ and $y'$  have a common prefix $p'$ and a
common suffix $s'$. Then $w(p)\equiv  w(p')\pmod{2}$ iff
$w(s)\equiv w(s')\pmod{2}$. This is also equivalent to saying that
if $q$ is the common prefix of $\overline{x}$ and $\overline{y}$
and $q'$ is the common prefix of $\overline{x'}$ and
$\overline{y'}$ then $w(p)\equiv w(p')\pmod{2}$ iff $w(q)\equiv
w(q')\pmod{2}$.\\

\noindent
For a binary sequence $x$ of length $n$, we denote by $x[I]$ its restriction to positions from $I$.
For example, if $x= 0010010110$ and $I=\{2,3, 5, 6, 10\}$ then
$x[I]=01010$.\\

\noindent {\bf Main idea of the proof.} Let $C=x_1, y_1, \ldots,
x_5, y_5, x_1$ be an induced  $C_{10}$ in $Q_n$ with $x_i \in
\binom{[n]}{k}$ and $ y_i \in \binom{[n]}{k+1}$, $i=1, \ldots, 5$.
Let $I(C)=\{i_1, \ldots, i_m\}$, $i_1<i_2<\cdots <i_m$,  be the set of positions
where $x_i$ and  $x_{i+1}$ differ for some $i\in [5]$.

We define a  $5\times m$ matrix ${\bf
A}={\bf A}(C)$ where the $i^{\rm th}$ row of ${\bf A}$ is  $x_i[I]$.

 First, we shall show that $m=5$. Second, we shall show that none of the $5\times 5$
binary matrices are possible as ${\bf A}(C)$ for an induced cycle
$C$ monochromatic under coloring $c$.\\

\noindent
{\bf Note.} For the rest of the proof we assume that $C$ is monochromatic under coloring $c$ and
that it is an induced cycle in $Q_n$. We take the addition modulo $5$ unless otherwise specified.\\

\begin{lemma}\label{dif_pos}
$|I(C)|=5$. Moreover, $1$'s occur in consecutive positions (modulo $5$)  in each column of ${\bf A}(C)$.
\end{lemma}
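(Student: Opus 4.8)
The plan is to analyze the structure of an induced monochromatic $C_{10}$ through its defining matrix ${\bf A}={\bf A}(C)$, using the distance constraints and the induced hypothesis. Recall that consecutive vertices $x_i,x_{i+1}$ differ in exactly two positions of $I(C)$, so each row of ${\bf A}$ differs from the next (cyclically) in exactly two columns; in particular each row has the same weight, say $t$. I would begin by ruling out small $m$. If $m\le 3$ the rows live in $\{0,1\}^m$ and one checks directly that no cyclic sequence of five distinct binary words of equal weight, with consecutive Hamming distance exactly $2$, can fit — indeed for $m\le 3$ there are too few words of a fixed weight at pairwise distance $2$ to form a $5$-cycle in the appropriate auxiliary graph, and collapsing columns would force $x_i=x_j$ for some $i\ne j$ (contradicting that $C$ is a cycle). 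If $m=4$, the relevant words of weight $t$ in $\{0,1\}^4$ at distance $2$ form (for $t=2$) the graph $K_{3,3}$ minus a perfect matching, i.e. a $6$-cycle, and for $t\in\{1,3\}$ a $4$-cycle — neither contains an induced or even a non-induced closed walk of length exactly $5$ visiting five distinct vertices, so $m=4$ is impossible. This forces $m\ge 5$.

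For the upper bound $m\le 5$, here is the key counting idea. Each column $j$ of ${\bf A}$ records, as $i$ ranges over $[5]$, the value of coordinate $i_j$ in $x_i$. As we traverse the cycle $x_1\to x_2\to\cdots\to x_5\to x_1$, the coordinate in position $i_j$ changes precisely on those steps where $i_j\in\{$the two positions where $x_i,x_{i+1}$ differ$\}$. Since the coordinate returns to its starting value after five steps, the number of steps on which column $j$ changes is even, hence $0$, $2$, or $4$. It cannot be $0$ (else that column is constant and $i_j\notin I(C)$, contradicting the definition of $I(C)$). So every column changes $2$ or $4$ times among the five cyclic steps. On the other hand, summing over all columns the number of changes equals $\sum_{i=1}^5 \dist(x_i,x_{i+1}) = 2\cdot 5 = 10$. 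If there are $m$ columns each contributing at least $2$, we get $2m\le 10$, i.e. $m\le 5$. Combined with $m\ge 5$ this gives $m=5$, and moreover equality forces every column to change \emph{exactly} twice.

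Finally, ``$1$'s occur in consecutive positions (mod $5$) in each column'' is exactly the statement that each column, read as a cyclic binary string of length $5$ with exactly two changes, consists of a block of consecutive equal entries; a cyclic $0/1$ string of length $5$ has exactly two sign changes if and only if it is of the form (a run of $1$'s)(a run of $0$'s) cyclically, i.e. the $1$'s form a consecutive arc. So the ``moreover'' clause is immediate once we know each column changes exactly twice. I expect the genuine obstacle to be the case $m=4$ (and the bookkeeping for $m\le 3$): one must be careful that ruling out a monochromatic induced $C_{10}$ with a four-element difference set is not automatic from the parity/counting argument alone — the counting gives $2m\le 10$, which permits $m=4$ with one column changing four times — so this case has to be excluded by the finer structural observation that five distinct vertices at cyclic distance $2$ simply do not exist inside $Q_4$ restricted to a fixed weight class, which is where Observation 1 and the explicit small-cube geometry come in. Everything else is a short parity count.
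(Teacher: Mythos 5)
Your overall architecture matches the paper's: rule out $m\le 3$ by counting words of fixed weight, handle $m=4$ separately, and obtain $m\le 5$ (hence $m=5$ with every column changing exactly twice, which immediately gives the ``moreover'' clause) from the parity count $\sum_j(\mbox{changes in column } j)=\sum_i\dist(x_i,x_{i+1})=10$ together with the fact that each column changes a positive even number of times. That counting step and the deduction about consecutive $1$'s are correct and are essentially the paper's Case 3.

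However, your treatment of $m=4$ contains a genuine error. For weight $t=2$, the graph on the six weight-$2$ words of $\{0,1\}^4$ with edges given by Hamming distance $2$ is not $K_{3,3}$ minus a perfect matching: two weight-$2$ words are at distance $2$ exactly when they are not complementary, so the graph is $K_6$ minus a perfect matching, i.e.\ the octahedron $K_{2,2,2}$, which is $4$-regular, non-bipartite, and does contain $5$-cycles. Concretely, $1100,\,1010,\,1001,\,0101,\,0110$ is a cyclic sequence of five distinct weight-$2$ words with all consecutive distances equal to $2$, so your claimed ``finer structural observation'' that five such vertices cannot exist in a fixed weight class of $Q_4$ is false, and your argument does not exclude $m=4$. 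The paper excludes this case by passing to the upper layer: if every row of ${\bf A}$ has two $1$'s, then each $y_i[I]=x_i[I]\cup x_{i+1}[I]$ has weight $3$, and there are only $\binom{4}{3}=4$ binary words of length $4$ and weight $3$, so two of the five $y_i$ must coincide (in the example above, $y_1[I]=y_5[I]=1110$), contradicting that $C$ is a $10$-cycle. That pigeonhole on the $y_i$'s is the missing idea; without it (or an equivalent substitute) your proof of $|I(C)|=5$ is incomplete.
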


\begin{proof}
Let $|I(C)|=\{i_1,\ldots,i_m\}$.
Observe first that all five  rows of ${\bf A}(C)$ form distinct binary words of the same weight.

\noindent {\it Case 1.} Let $m\leq 3$. Then there are at most $3$ distinct binary words of
the same weight with length $m$, a contradiction.

\noindent {\it Case 2.} Let  $m= 4$. If the number of $1$'s in each row is $i$,
we have that the total possible number of distinct rows is $\binom{4}{i}$. This number is at least  $5$ only  if $i=2$.
Thus each row of $A$ has two $1$'s.
Therefore, $y_i[I]$ has weight three for each $i=1, \ldots, 5$.
But there are only four binary words of length four and weight three,
a contradiction to the fact that we have five distinct $y_i$'s.

\noindent {\it Case 3.} Assume that $m\geq 5$. We shall introduce
a (multi-)graph $H$ on vertices $v_{i_1}, \ldots, v_{i_m}$.
Let $v_{i_s}v_{i_t}\in E(H)$ if some pair of consecutive $x_i$'s differ exactly in positions $i_s$ and
$i_t$. Note that since $C$ is a cycle, a graph $H$ must have even
nonzero degrees, thus $5=|E(H)| \geq |V(H)|=m$. 
Hence, $m=5$ and  each vertex has degree $2$. 
Since a vertex of $H$ corresponds to a column of
${\bf A}(C)$, and the  degree of a vertex in $H$ corresponds to a number of times $1$ changes to $0$ and $0$ changes to $1$ on consecutive elements in a corresponding column of ${\bf A}(C)$ (in cyclic order), we must have  $1$'s occur consecutively in each column
of ${\bf A}(C)$.

\end{proof}
\vskip .2in

We say that a matrix ${\bf A}$ has {\bf  bad  prefixes} if there are
two pairs of consecutive rows ${\bf r}_i,{\bf r}_{i'}$ and ${\bf
r}_{j}, {\bf r}_{j'}$ in ${\bf A}$, $i'=i\pm 1\pmod 5$, $j'=j \pm 1\pmod 5$,  such that either \vspace{-.1in}
\begin{itemize}
\item[(1)] ${\bf r}_i$ starts with ${\bf a}0{\bf b} 0$, ${\bf r}_{i'}$
starts with ${\bf a}0{\bf b}1$; ${\bf r}_j$ starts with ${\bf a}0{\bf b}1$,
${\bf r}_{j'}$ starts with ${\bf a}1{\bf b}0$ or \vspace{-.1in} \\
\vspace{-.2in} \item[(2)] ${\bf r}_i$  starts with ${\bf a}0{\bf b}0$,
${\bf r}_{i'}$ starts with ${\bf a}0{\bf b}1$; ${\bf r}_j$  starts with
${\bf a}1{\bf b}1$, ${\bf r}_{j'}$ starts with ${\bf a}1{\bf b}0$,
\vspace{-.1in}
\end{itemize}
for some binary words ${\bf a}$, ${\bf b}$. \\

For a matrix ${\bf A}$ with rows ${\bf v}_1, \ldots, {\bf v}_5$, let
$\overline{{\bf A}}$ be a matrix with rows $ \overline{{\bf v}_1},
\ldots, \overline{{\bf v}_5}$ in the same order.

\begin{lemma}\label{prefix_suffix}
Let  ${\bf A}={\bf A}(C)$, then neither ${\bf A}$ nor
$\overline{{\bf A}}$ have {\it bad} prefixes.
\end{lemma}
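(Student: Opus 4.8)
The plan is to show that a matrix with bad prefixes forces the corresponding cycle to be non-monochromatic under $c$, contradicting our standing assumption on $C$; since $\overline{\bf A}(C) = {\bf A}(\overline{C})$ and the reverse cycle $\overline{C}$ is still an induced monochromatic $C_{10}$ (by Observation 1, monochromaticity is preserved under reversal, and being induced is obviously preserved), it suffices to rule out bad prefixes in ${\bf A}$ itself. So assume for contradiction that ${\bf A} = {\bf A}(C)$ has bad prefixes, witnessed by consecutive rows ${\bf r}_i, {\bf r}_{i'}$ and ${\bf r}_j, {\bf r}_{j'}$ as in (1) or (2). Recall that row ${\bf r}_i$ is $x_i[I]$, and that consecutive vertices $x_i, x_{i+1}$ differ in exactly two positions of $I$ — namely the two columns where ${\bf r}_i$ and ${\bf r}_{i'}$ differ — and that $y_i = x_i \cup x_{i+1}$. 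The common prefix $p(x_i y_i)$ of the edge from $x_i$ up to $y_i$ is the part of $x_i$ preceding the first of these two changed positions; likewise the common prefix of the edge $y_i x_{i+1}$ ends just before the second changed position.

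The key step is to translate the combinatorial patterns (1) and (2) into statements about the weights $w(p)$ of the relevant edge-prefixes modulo $2$, and then observe these weights disagree across the two witnessing row-pairs, so the edges get different $c_2$-values (they all have the same $c_1 = k \bmod 2$ since they lie in one edge-layer), contradicting monochromaticity. Concretely: in case (1), the pair $({\bf r}_i,{\bf r}_{i'})$ changes in the column holding the displayed final ``$0\to 1$'' (the $(|{\bf a}|+|{\bf b}|+2)$-th position), with everything before it — the prefix ${\bf a}0{\bf b}$ of length $|{\bf a}|+|{\bf b}|+1$ — common; the pair $({\bf r}_j,{\bf r}_{j'})$ instead changes first in the column holding ``${\bf a}0\to {\bf a}1$'' (position $|{\bf a}|+1$) with common prefix just ${\bf a}$. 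One then compares $w({\bf a}0{\bf b})$ against $w({\bf a})$: they differ modulo $2$ precisely when ${\bf b}$ has odd weight, and when ${\bf b}$ has even weight one instead compares the \emph{second} prefixes on each of the four edges (the ones ending before the second changed position), using Observation 1 or a direct parity computation to force a disagreement there. Case (2) is handled the same way with the roles of the two changed columns tracked carefully; the patterns were defined precisely so that in every sub-case some pair of edges among the four ends up with prefixes of different weight modulo $2$.

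The main obstacle I anticipate is bookkeeping rather than depth: there are several sub-cases depending on (i) whether it is the ``first'' or ``second'' changed column of a row-pair that aligns with the displayed prefix structure, and (ii) the parities of the auxiliary words ${\bf a}$ and ${\bf b}$, and one must be careful that the columns named in the definition of ``bad prefixes'' really are columns of ${\bf A}(C)$ and that the prefixes computed from $x_i[I]$ genuinely reflect the prefixes $p(x_i y_i)$ of the ambient edges (the positions outside $I$ are irrelevant to the parity comparison since $x_i$ and $x_{i+1}$ agree there, so they contribute the same amount to both prefixes being compared — this is exactly the content of Observation 1 applied to restrictions). Once the correspondence ``changed column $\leftrightarrow$ edge-prefix parity'' is set up cleanly, each of the finitely many sub-cases of (1) and (2) reduces to a one-line parity check, and any one successful check exhibits two edges of $C$ with distinct colors, completing the contradiction and hence the proof.
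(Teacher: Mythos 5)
Your overall strategy matches the paper's: assume bad prefixes, exhibit two edges of $C$ whose common prefixes have different weight parities, and dispose of $\overline{{\bf A}}$ by running the same argument on the reversed words via Observation~1. But your execution of case (1) has a genuine gap. You compare the prefix ${\bf a}0{\bf b}$ (from the edge of the $({\bf r}_i,{\bf r}_{i'})$ pair, which changes at the $(|{\bf a}|+|{\bf b}|+2)$-nd column of $I$) against the prefix ${\bf a}$ (from the edge of the $({\bf r}_j,{\bf r}_{j'})$ pair changing at the $(|{\bf a}|+1)$-st column). These two prefixes end at \emph{different} positions of $I$, so when lifted to the ambient words in $Q_n$ the corresponding full prefixes contain different sets of positions outside $I$; the weight of that extra filler is uncontrolled, so your criterion ``they differ mod $2$ precisely when ${\bf b}$ has odd weight'' is not valid for the actual edge-prefixes. (Your own justification --- that positions outside $I$ ``contribute the same amount to both prefixes'' --- holds only when the two prefixes end at the same position.) Even setting that aside, your fallback for $w({\bf b})$ even, namely comparing the ``second'' prefixes of the four edges, does not close the gap: the second changed position of the $(i,i')$ pair lies beyond the displayed pattern ${\bf a}0{\bf b}*$, so that prefix's weight is likewise uncontrolled.

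The repair, which is exactly what the paper does, is to pick the \emph{other} edge from the $(j,j')$ pair: since $x_{j'}$ restricts to ${\bf a}1{\bf b}0\cdots$ and $y'=x_j\cup x_{j'}$ restricts to ${\bf a}1{\bf b}1\cdots$, the edge $x_{j'}y'$ has common prefix $a_1 1 a_2$ in the full words, ending at the same position as the prefix $a_1 0 a_2$ of the edge $x_i(x_i\cup x_{i'})$. The two full prefixes then differ in exactly one bit, so their weights differ by one and the color clash is immediate --- no sub-cases, no dependence on $w({\bf b})$, and the filler outside $I$ cancels because both prefixes cover the same set of positions. Case (2) is handled identically. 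In short, you have the right framework but compare the wrong pair of edges, and the resulting parity analysis as you sketch it does not go through.
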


\begin{proof}
Assume that the matrix ${\bf A}$ has bad prefixes.
Then  $x_i= a_1 0 a_2 0 a_3$,
$x_{i'} = a_1 0 a_2 1a_3'$, and $y= x_i \cup x_{i'} = a_1 0 a_2 1 a_3$, for some binary words $a_1, a_2,
a_3, a_3'$. Here, $y=y_i$ or $y=y_{i'}$.  The common prefix of $x_i$ and $y$ is $a_1 0 a_2$.

In part (1) we also have that  $x_j= a_1 0 a_2 1 b_3$, $x_{j'} = a_1 1 a_2 0 b_3$,
$y'= x_j \cup x_{j'}=a_1 1 a_2 1 b_3$, for some binary word $b_3$. Here $y'=y_j$ or $y'=y_{j'}$.  The common prefix of $x_{j'}$ and
$y'$ is $a_1 1 a_2$. Thus, $c(x_{j'}y') \neq c(x_{i}y)$ since the weights of
corresponding prefixes are different modulo $2$.

In part (2) we also have that  $x_j= a_1 1 a_2 1 b_3$, $x_{j'} = a_1 1 a_2 0 b_3'$,
$y'=x_j\cup x_{j'}= a_1 1 a_2 1 b_3'$, for some binary words $b_3, b_3'$. Here $y'=y_j$ or $y'=y_{j'}$.
  The common prefix of $x_{j'}$ and $y'$ is
$a_1 1 a_2$. Thus, $c(x_{j'}y') \neq c(x_{i}y)$.

In both  cases we have a  contradiction
to the assumption that the  cycle $C$ is monochromatic.

When $\overline{{\bf A}}$ has
bad prefixes, we use Observation $1$ to arrive at the same
conclusion.
\end{proof}

\begin{lemma} \label{2ones}
The matrix ${\bf A}={\bf A}(C)$ has exactly two $1$'s in each row.
\end{lemma}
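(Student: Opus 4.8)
The plan is to let $w$ be the common weight of the rows of $\mathbf{A}=\mathbf{A}(C)$ and show that $w=2$. The preliminary reduction: no coordinate outside $I(C)$ is one in which two consecutive $x_i$'s differ, so all five $x_i$ agree off $I(C)$; hence the rows $x_i[I(C)]$ of $\mathbf{A}$ are five pairwise distinct binary words of weight $w$, and (since $\dist(x_i,x_{i+1})=2$) the words $y_i[I(C)]=x_i[I(C)]\cup x_{i+1}[I(C)]$ are five pairwise distinct binary words of weight $w+1$. From $\binom{5}{w}\ge 5$ and $\binom{5}{w+1}\ge 5$ we get $w\in\{1,2,3\}$, and it remains to rule out $w=1$ and $w=3$. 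Throughout I will use the elementary fact that in a cyclic sequence of odd length with distinct entries not every entry can be a local extremum, since local maxima and local minima must alternate around the cycle.

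Suppose $w=1$. Then $\mathbf{A}$ is a permutation matrix: row $i$ has its single $1$ in column $\pi(i)$, and $(\pi(1),\dots,\pi(5))$ is a permutation of $[5]$. Pick $i_0$ with, say, $\pi(i_0-1)<\pi(i_0)<\pi(i_0+1)$ (the reverse inequality is symmetric), set $c=\pi(i_0-1)$, $a=\pi(i_0)$, $\mathbf{a}=0^{c-1}$, $\mathbf{b}=0^{a-1-c}$. A short check using that rows $i_0-1,i_0,i_0+1$ are $0^{c-1}1\cdots$, $0^{a-1}1\cdots$, $0^{\pi(i_0+1)-1}1\cdots$ shows that the consecutive pair $(i_0+1,i_0)$ begins with $(\mathbf{a}0\mathbf{b}0,\mathbf{a}0\mathbf{b}1)$ and the consecutive pair $(i_0,i_0-1)$ begins with $(\mathbf{a}0\mathbf{b}1,\mathbf{a}1\mathbf{b}0)$, i.e. $\mathbf{A}$ has bad prefixes of type~$(1)$, contradicting Lemma~\ref{prefix_suffix}.

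Suppose $w=3$. Then the words $y_i[I(C)]$ have weight $4$, and since $\binom{5}{4}=5$ they are \emph{all} five weight-four words; equivalently, in every column exactly one index $i$ has $x_i[I(C)]$ and $x_{i+1}[I(C)]$ both equal to $0$ there. By Lemma~\ref{dif_pos} column $j$ of $\mathbf{A}$ is an arc of $1$'s of some length $\ell_j$, so its $0$'s form an arc of length $5-\ell_j$ containing exactly $4-\ell_j$ consecutive pairs $\{i,i+1\}$; hence $\ell_j=3$ for every $j$. A counting argument then shows the five length-three arcs begin in five distinct rows, so there is a permutation $\phi$ of $[5]$ with the $1$-arc of column $\phi(s)$ equal to $\{s,s+1,s+2\}$; thus row $i$ is supported on columns $\{\phi(i-2),\phi(i-1),\phi(i)\}$ and consecutive rows $i,i+1$ differ exactly in columns $\phi(i-2)$ and $\phi(i+1)$. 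Applying the odd-cycle fact to $(\phi(1),\dots,\phi(5))$ gives an index whose two cyclic neighbours in that sequence straddle its value; tracking the three $1$'s of the rows involved — passing to $\overline{\mathbf{A}}$ via Observation~1 if convenient — then produces bad prefixes of type~$(1)$ or~$(2)$ in $\mathbf{A}$ or $\overline{\mathbf{A}}$, again contradicting Lemma~\ref{prefix_suffix}. Therefore $w=2$.

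I expect the $w=3$ case to be the main obstacle. One must first pin down the rigid arc structure (all column-arcs of length exactly $3$, starting in distinct rows), which relies on combining the distinctness of the words $y_i[I(C)]$ with Lemma~\ref{dif_pos}; and one must then carry out the bad-prefix bookkeeping, which is more delicate than for $w=1$ because each row now carries three $1$'s whose positions relative to the two ``changing'' columns of a consecutive pair have to be tracked when locating the prefixes $\mathbf{a},\mathbf{b}$.
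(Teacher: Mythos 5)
Your reduction to $w\in\{1,2,3\}$ and your treatment of $w=1$ are correct and complete; the $w=1$ argument is in fact an explicit version of what the paper dismisses as ``a routine observation.'' The structural analysis for $w=3$ (every column is an arc of exactly three consecutive $1$'s, the five arcs start in five distinct rows, hence the permutation $\phi$) is also correct and sharpens the paper's intermediate conclusions. The gap is the final step of the $w=3$ case, and it is not merely an omitted computation: the claim that one can always ``produce bad prefixes of type (1) or (2) in $\mathbf{A}$ or $\overline{\mathbf{A}}$'' is false. Take $\phi=(1,3,5,2,4)$, i.e.\ the matrix with rows $11010,\ 10110,\ 10101,\ 01101,\ 01011$. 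Every consecutive pair of rows differs in exactly two columns (namely $\{2,3\},\{4,5\},\{1,2\},\{3,4\},\{1,5\}$), every column is an arc of three consecutive $1$'s, and the five words $y_i[I]$ are the five distinct weight-$4$ words, so this matrix survives all of your structural reductions. Checking all ten column pairs shows that neither type of bad prefix occurs in $\mathbf{A}$; and since here $\overline{\mathbf{A}}$ is just $\mathbf{A}$ with the cyclic order of the rows reversed, $\overline{\mathbf{A}}$ has none either. So Lemma \ref{prefix_suffix} alone cannot finish the $w=3$ case.

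This is consistent with the paper's own proof: after reaching essentially your arc structure, the paper does not conclude via bad prefixes in its final two subcases, but instead exhibits directly two edges of the cycle whose common prefixes ($a_01a_11a_2$ versus $a_01a_10a_2$ in one case, $a_01a_11a_2$ versus $a_00a_11a_2$ in the other) have different weight parities --- a configuration involving \emph{three} distinguished columns, which falls outside the scope of the bad-prefix definition, since that definition only controls two distinguished columns and forces the four rows to agree everywhere else among the leading columns. To close your argument you would need either to extend Lemma \ref{prefix_suffix} to cover such three-column configurations, or, as the paper does, to compare the prefix weights of specific cycle edges directly in the configurations that survive the bad-prefix test (your $\phi=(1,3,5,2,4)$ being exactly such a survivor).
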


\begin{proof}
Since all  $x_i$'s have  the same weight, each row of ${\bf A}$ contains
the same number of $1$'s.

Assume that each row of ${\bf A}$ contains exactly one $1$. There are
three consecutive rows of ${\bf A}$ which form binary words in
increasing  lexicographic order. It is a routine observation to see that either ${\bf A}$ or ${\bf \overline{A}}$  has
{\it bad} prefixes because of these rows.

Assume that there are three $1$'s and two $0$'s in each row of
${\bf A}$. Then  $y_i[I]$  has weight $4$, for $i=1, \ldots, 5$. There
are exactly $5$ binary words of length $5$ and weight $4$, namely
$11110, 11101, \ldots, 01111$.
Since each column of ${\bf A}$ has consecutive $1$'s, it has consecutive $0$'s.
If the number of $0$'s in some column, $i$,  is more than two, then
there are at least two words, $y_t[I]$ and $y_q[I]$ which have
$0$ in the $i$th position, a contradiction.
Thus each column of ${\bf A}$ must have exactly two consecutive $0$'s and three
consecutive $1$'s (in cyclic order). Then each $x_i= a_0 * a_1 *
a_2 * a_3 * a_4 *a_5$, where $a_j$'s are some binary words and
$*\in \{0,1\}$ are located in positions from $I$.

If there is a row with two consecutive $0$'s in the first and
second column then we immediately get {\it bad} prefixes since the
first two columns must be as follows, up to cyclic rotation of
the rows:
\begin{equation}\nonumber
\left(
\begin{matrix}
1&1\\
1&1\\
1&0\\
0&0\\
0&1
\end{matrix}
\right)
\end{equation}

Thus we can assume that the first two columns of ${\bf A}$ or
${\bf \overline {A}}$ are as follows, up to cyclic rotation of the
rows:

\begin{equation}\nonumber
\left(
\begin{matrix}
1&0\\
1&0\\
1&1\\
0&1\\
0&1
\end{matrix}
\right)
\end{equation}
Now, we see that $a(3,3)=0$, otherwise $a(3,4)=a(3,5)=0$ and we
arrive at {\it bad} prefixes in ${\bf \overline{A}}$, similarly to the previous argument.
Then we have two possible cases for the first three columns of
${\bf A}$:

\begin{equation}\nonumber
\left(
\begin{matrix}
1&0&1\\
1&0&0\\
1&1&0\\
0&1&1\\
0&1&1
\end{matrix}
\right)\quad\mbox{ and }\quad\left(
\begin{matrix}
1&0&1\\
1&0&1\\
1&1&0\\
0&1&0\\
0&1&1
\end{matrix}
\right).
\end{equation}
Note that in the first  case we  have prefixes $a_01a_11a_2$
and $a_01a_10a_2$, which have different weights modulo $2$, and in
the second case we have prefixes $a_01a_11a_2$ and $ a_00a_11a_2$,
which have different weight modulo $2$, each case is a
contradiction to the fact that $C$ is monochromatic.

Finally, the rows of ${\bf A}$ cannot contain more than three  $1$'s
each since in this case $y_i=y_j$ for $i,j\in [5]$.
\end{proof}

Note that Lemma \ref{2ones} implies that the total number of ones
in $\bf A$ is $10$.

\begin{lemma}\label{three_ones}
If there are indices $i_1, i_2, i_3$ and $j_1, j_2$ such
that ${\bf A}(C)$ has entries $a(i_1, j_1)= a(i_1, j_2)=a(i_2,
j_1)=a(i_2, j_2)=a(i_3, j_1)=a(i_3, j_2)=0$, then $C$ has a chord
in $Q_n$.
\end{lemma}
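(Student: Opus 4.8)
The plan is to analyze what the hypothesis says combinatorially about the cycle $C$ and then to exhibit an explicit chord. Recall that $|I(C)|=5$ by Lemma~\ref{dif_pos}, so $j_1,j_2\in[5]$ are two of the five coordinate positions in $I(C)$, and $i_1,i_2,i_3\in[5]$ index three of the five vertices $x_i$. The hypothesis says that all three of $x_{i_1},x_{i_2},x_{i_3}$ agree on positions $j_1$ and $j_2$, having a $0$ in each. By Lemma~\ref{2ones} each row of ${\bf A}(C)$ has exactly two $1$'s, so since rows $i_1,i_2,i_3$ are $0$ in both columns $j_1$ and $j_2$, their two $1$'s all lie among the remaining three columns of $I(C)$. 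There are only three binary words of length $3$ and weight $2$ (namely $110,101,011$), so rows $i_1,i_2,i_3$ realize exactly these three, in particular they are \emph{distinct} and each differs from each of the others in exactly two of those three positions; equivalently $\dist(x_{i_a},x_{i_b})=2$ for every pair $a\neq b$ from $\{1,2,3\}$, since they already agreed outside $I(C)$ and on $j_1,j_2$.

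The second step is to turn ``$\dist(x_{i_a},x_{i_b})=2$'' into a genuine chord of the $10$-cycle. Three of the five $x$-vertices are pairwise at distance $2$, so at least one such pair $x_a,x_b$ is non-adjacent on the cycle $C$ (the $x$-vertices sit in cyclic positions $x_1,\dots,x_5$, and among any three of them two are non-consecutive in the $5$-cycle on $\{x_1,\dots,x_5\}$). Write $z=x_a\cap x_b$; since $|x_a|=|x_b|=k$ and $\dist(x_a,x_b)=2$, we have $z\in\binom{[n]}{k-1}$, and $x_a\cup x_b\in\binom{[n]}{k+1}$. I would use the vertex $w=x_a\cup x_b$: both $x_aw$ and $x_bw$ are edges of $Q_n$, so $w$ is a common neighbour of $x_a$ and $x_b$. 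Now $w$ is either equal to one of the $y_i$'s on $C$ or it is not. If $w\notin\{y_1,\dots,y_5\}$, then $w$ together with the two edges $x_aw,wx_b$ gives a path of length $2$ joining two non-adjacent $C$-vertices; actually one checks more carefully that what the lemma wants is simply an edge of $Q_n$ between two non-consecutive vertices of $C$, and the natural candidate is the edge between $x_a$ (or its neighbour) and one of $y_a,\dots$. Let me instead directly produce an edge: since $\dist(x_a,x_b)=2$, consider the vertex $y=x_a\cup x_b$; if $y$ equals some $y_i$ of the cycle, then $y_i$ is adjacent to $x_a$ and to $x_b$, and since $y_i$ is adjacent on $C$ only to $x_i$ and $x_{i+1}$, at least one of $x_a,x_b$ must be non-consecutive to $y_i$ on $C$, giving the chord; and if $y\notin\{y_1,\dots,y_5\}$ is not a cycle vertex, then, because $C$ is induced, having $x_a,x_b$ at distance $2$ with common neighbour $y$ off the cycle still leaves open whether there is a chord, so one must rule this out. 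The cleanest route: show $\dist(x_a,x_b)=2$ forces an edge between $\{x_a,x_b\}$-neighbourhood already present on $C$, or use that $C$ induced would be contradicted, but the lemma only asserts ``$C$ has a chord in $Q_n$'', which I read as: $C$ is not induced. So it suffices to find two vertices of $C$ at distance $1$ that are not consecutive on $C$.

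So the real goal is: from the three pairwise-distance-$2$ vertices $x_a,x_b,x_c$ (two of which are non-adjacent on $C$, say $x_a,x_b$), produce a pair of $C$-vertices at Hamming distance $1$ that are non-consecutive on $C$. The key observation is that $y=x_a\cup x_b$ is at distance $1$ from both $x_a$ and $x_b$; if $y$ is one of the five $y_i$'s, we are done as above. If not, compare $y$ with the actual cycle-neighbours: the two $y$-vertices adjacent to $x_a$ on $C$ are $y_{a-1}=x_{a-1}\cup x_a$ and $y_a=x_a\cup x_{a+1}$. Since $x_a$ has weight $k$ and lies in a $5$-cycle where consecutive $x$'s are at distance $2$, and $x_b$ (non-consecutive) is also at distance $2$, I will argue by a parity/weight count on the restricted matrix ${\bf A}(C)$ that one of $y_{a-1},y_a,y_{b-1},y_b$ must in fact coincide with $x_a\cup x_b$ or be adjacent to the ``wrong'' $x$-vertex, exploiting that all entries live in the $5\times 5$ matrix and the three special rows are exactly $110,101,011$ in the three non-$\{j_1,j_2\}$ columns. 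The main obstacle, and the part I expect to require the most care, is precisely this last bookkeeping: translating the very concrete structure of ${\bf A}(C)$ (three rows forming $110,101,011$ on three columns, the other two rows and the interaction with the $0$-columns $j_1,j_2$) into the existence of a non-cycle-consecutive adjacent pair, while making sure the two $C$-vertices one names really are non-consecutive on the $10$-cycle. Everything before that is routine: counting weight-$2$ words of length $3$, and the pigeonhole that among three of the five cyclically arranged $x_i$ two are non-adjacent.
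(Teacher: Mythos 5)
Your first step is correct and matches the paper: rows $i_1,i_2,i_3$ vanish on columns $j_1,j_2$, so by Lemma~\ref{2ones} their two $1$'s lie in the remaining three columns, forcing those three restricted rows to be exactly $110$, $101$, $011$. But the argument then stalls exactly where you flag it: you pick a non-consecutive pair $x_a,x_b$, form $y=x_a\cup x_b$, and cannot decide whether $y$ lies on $C$; the ``parity/weight bookkeeping'' you defer to is never carried out, so the proof is incomplete at its decisive step.

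The missing idea is that $110\cup 101=110\cup 011=101\cup 011=111$: since $x_{i_1},x_{i_2},x_{i_3}$ agree everywhere outside those three columns (they agree off $I(C)$ and are all $0$ on $j_1,j_2$), \emph{all three pairwise unions are one and the same vertex} $y^*=x_{i_1}\cup x_{i_2}=x_{i_2}\cup x_{i_3}=x_{i_1}\cup x_{i_3}$. Now apply the pigeonhole fact you already noted, but to the \emph{adjacent} pair rather than the non-adjacent one: two of the three, say $x_{i_1}=x_1$ and $x_{i_2}=x_2$, are consecutive on $C$, so $y^*=x_1\cup x_2=y_1$ is a vertex of the cycle. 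The third vertex $x_{i_3}$ cannot be $x_3$ or $x_5$, since that would force $y_2=y_1$ or $y_5=y_1$; hence $x_{i_3}=x_4$, and $x_4\cup x_1=y^*=y_1$ exhibits the edge $y_1x_4$ of $Q_n$ between two non-consecutive vertices of $C$, i.e.\ a chord. This single identity is what lets the union of the non-consecutive pair be recognized as a cycle vertex, and it replaces all of the case analysis you anticipate needing.
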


\begin{proof}
Consider $x_{i_1}, x_{i_2}, x_{i_3}$. Note that $x_{i_1}\cup
x_{i_2}= x_{i_2}\cup x_{i_3}=x_{i_1}\cup x_{i_3}$.  At least two
of these three vertices must be successive vertices of $C$ in a
layer $\binom{[n]}{k}$, say without loss of generality that
$x_{i_1}=x_1$ and $x_{i_2}=x_2$. But then $x_{i_3} \neq x_3$,
$x_{i_3}\neq x_5$, otherwise we shall  have $y_1=y_2$ or
$y_1=y_5$. Thus $x_{i_3}=x_4$ and $y_1x_4$ forms a chord of $C$ in
$Q_n$.
\end{proof}

Now, we are ready to complete the proof of the Theorem \ref{construction}.
 Recall first that  Lemma \ref{dif_pos} implies that $1$'s occur consecutively in each column of ${\bf A}$ (up to
cyclic  rotation of the rows). Second, observe that
there are no  two consecutive rows of  ${\bf A}$  such that one has two $0$'s in columns $j,j'$ and another has two $1$'s in the same columns  $j,j'$
(otherwise $x_i$ and $x_{i+1}$  will have a distance at least four between them).
We see
from Lemma \ref{three_ones} that ${\bf A}$ has at most one column
with at most one $1$. Since the total number of $1$'s in ${\bf A}$ is
$10$, we have that  each column of ${\bf A}$ has at at most three $1$'s.
Moreover there is at most one column with three $1$'s. Let  ${\bf A}$
have  exactly one column with exactly one $1$. Then, there must be
a column with exactly three $1$'s.
Consider possible submatrices formed by these two columns (up to cyclically rotating the rows):
\begin{equation}\nonumber
B_1=\left(
\begin{matrix}
0&0\\
1&0\\
0&1\\
0&1\\
0&1
\end{matrix}
\right), \quad
B_2=\left(
\begin{matrix}
0&1\\
1&1\\
0&1\\
0&0\\
0&0
\end{matrix}
\right).
\end{equation}

Since there are exactly two $1$'s in each row of ${\bf A}$
and Hamming distance between consecutive rows is $2$, we have,
in case of $B_1$, that there must  be a column   $i$, where  $i\notin \{1, 2\}$, such that  $a(2,i) =
a(3,i)=1$, which implies that  $a(1,i)=1$.  Thus there are two columns with
exactly three $1$'s, a contradiction.  $B_2$  is possible only if all five columns  {\it  up to permutation}  are as follows.
\begin{equation}\label{eq2}
\left(
\begin{matrix}
0&1& 1& 0&0\\
1&1& 0& 0&0\\
0&1& 0& 1&0\\
0&0& 0& 1&1\\
0&0& 1& 0&1
\end{matrix}
\right)
\end{equation}
(This is true because each row has two
$1$'s and because the  Hamming distance between consecutive rows is $2$.)

If we apply Lemma \ref{prefix_suffix} to all pairs of columns
in (\ref{eq2}), we see that the only possible pairs for the first two and
the last two columns are $\{1, 2\}, \{1,5\}, \{2,5\}$. Thus it is
impossible to choose two acceptable last columns and two
acceptable first columns at the same time.  This concludes the
argument that ${\bf A}$ cannot have columns with exactly one $1$.

Therefore ${\bf A}$ has exactly two consecutive $1$'s in each column.
 Consider the first two columns. There are only two
possibilities: when there is a row $i$ such that $a(i,1)=a(i,2)=1$
and when there is no such row. In both situations, it is again a routine observation to
see that matrix ${\bf A}$ has bad prefixes.

Thus no $5\times 5$ matrix of $0$'s and $1$'s can be equal to ${\bf A}(C)$
for some monochromatic induced $10$-cycle $C$.  This concludes the proof
of  Theorem \ref{construction}. \proofend

\noindent
{\bf Remark.} The above proof basically reduces the analysis of the coloring $c$ on $E(Q_n)$
to the analysis of the coloring $c$ on $E(Q_5)$.
We believe that a similar result should hold  for chordless $C_{4k+2}$ in $Q_n$, $k>2$.

\section{The general results for even cycles in a hypercube}\label{UB}

The following result of Fan Chung \cite{C} states that
dense subgraphs of a hypercube contain all ``not too long''
cycles of lengths divisible by $4$.
\begin{theorem}
For each $t$, there is a constant $c=c(t)$ such that if  $G$ is a subgraph of $Q_n$ with at least $cn^{-1/4}|E(Q_n)|$ edges
then $G$ contains all cycles of lengths $4k$, $2\leq k\leq t$.
\end{theorem}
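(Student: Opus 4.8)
The plan is to obtain this statement as an immediate corollary of Chung's upper bound quoted in the introduction, namely $f(n,C_{4k}) \le c_k\, n^{(1/2k)-(1/2)}|E(Q_n)|$ for each fixed $k \ge 2$, where $c_k$ is the constant produced by her argument. The key elementary observation is that $k \mapsto (1/2k)-(1/2)$ is decreasing, so among the exponents $(1/2k)-(1/2)$ with $2 \le k \le t$ the largest occurs at $k=2$ and equals $-1/4$; all others are strictly smaller. Hence for every $n \ge 1$ and every $k$ with $2 \le k \le t$ we have $n^{(1/2k)-(1/2)} \le n^{-1/4}$, and therefore
\[
f(n,C_{4k}) \;\le\; c_k\, n^{-1/4}\,|E(Q_n)|.
\]

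Now set $c = c(t) := 1 + \max_{2 \le k \le t} c_k$, which is finite since the maximum is taken over finitely many indices. Suppose $G \subseteq Q_n$ has $|E(G)| \ge c\, n^{-1/4}|E(Q_n)|$. Then for each $k$ with $2 \le k \le t$,
\[
|E(G)| \;\ge\; c\, n^{-1/4}|E(Q_n)| \;>\; c_k\, n^{-1/4}|E(Q_n)| \;\ge\; f(n,C_{4k}).
\]
Since $f(n,C_{4k})$ is by definition the maximum number of edges in a $C_{4k}$-free subgraph of $Q_n$, any subgraph of $Q_n$ with strictly more than $f(n,C_{4k})$ edges must contain a cycle of length $4k$. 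Applying this for each $k \in \{2,\dots,t\}$ in turn shows that $G$ contains all cycles of length $4k$ with $2 \le k \le t$, which is the assertion.

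There is essentially no obstacle beyond invoking the cited estimate: the real content lies in Chung's bound $f(n,C_{4k}) \le c_k\, n^{(1/2k)-(1/2)}|E(Q_n)|$ itself, whose proof — a local counting argument bounding the number of $4k$-cycles forced by a dense edge set, combined with a convexity/averaging step over the layers of $Q_n$ — we do not reproduce here. The only point requiring care in the deduction is the monotonicity of the exponent, which is what singles out $-1/4$ as the governing rate; everything else is immediate from the definition of $f(n,C_l)$.
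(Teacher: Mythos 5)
Your deduction is logically sound, but be aware that the paper itself offers no proof of this theorem: it is stated verbatim as a result of Chung \cite{C}, quoted as background at the start of Section \ref{UB}, so there is no internal argument to compare yours against. What you have done is derive one cited result of Chung from another cited result of Chung (the bound $f(n,C_{4k})\le c_k\,n^{(1/2k)-(1/2)}|E(Q_n)|$ from the introduction), and the steps you supply --- monotonicity of the exponent $(1/2k)-(1/2)$ in $k$, taking a finite maximum of the constants $c_k$, and the definitional fact that exceeding $f(n,C_{4k})$ edges forces a $C_{4k}$ --- are all correct. Two remarks. First, in Chung's original paper the logical dependence runs the other way: the ``contains all short cycles of length $4k$'' statement is the theorem and the extremal bound on $f(n,C_{4k})$ is its immediate corollary, so your argument inverts the source's derivation; this is harmless here since both statements are quoted as known, but it means your proof has no content independent of the citation, as you yourself acknowledge. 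Second, a minor technical point: bounds of this type typically hold only for $n\ge n_0(k)$; for the finitely many smaller $n$ you should note that enlarging $c(t)$ past $\max_k n_0(k)^{1/4}$ makes the hypothesis $|E(G)|\ge c\,n^{-1/4}|E(Q_n)|$ unsatisfiable, so the conclusion holds vacuously. With that one sentence added, the reduction is complete.
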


On the other hand, we can conclude that a dense subgraph of a
hypercube contains a cycle of length $4k+2$ for some $k$ which
follows from the following strengthening of a classical theorem of
Bondy and Simonovits \cite{BS} by Verstra\"ete \cite{V}.

\begin{theorem}\cite{V}
Let $q\geq 2$ be a natural number and $G$ a bipartite graph of
average degree at least $4q$ and girth $g$. Then there exist
cycles of $(g/2-1)q$ consecutive even lengths in $G$.
\end{theorem}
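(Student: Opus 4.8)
\medskip
\noindent\textbf{Proof proposal.} The plan is to reduce the statement to a minimum-degree hypothesis and then to extract a single ``rich'' path along which the girth and the degree bound interact to produce a long run of consecutive even cycle lengths.

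First I would replace $G$ by a subgraph $H$ with $\delta(H)\ge 2q$: iteratively delete any vertex of current degree less than $2q$. If this process ever exhausted the graph it would have removed fewer than $2q$ edges per vertex, hence fewer than $2q|V(G)|$ edges in total, contradicting $|E(G)|\ge 2q|V(G)|$ (which is what average degree $\ge 4q$ says). So a nonempty $H$ with $\delta(H)\ge 2q$ survives. Passing to a subgraph never decreases the girth and preserves bipartiteness, so it suffices to find $(g/2-1)q$ consecutive even cycle lengths in a bipartite $H$ with $\delta(H)\ge 2q$ and girth $g$.

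Next I would fix a longest path $P=v_0v_1\cdots v_m$ in $H$. Both endpoints send all their edges into $V(P)$, for otherwise $P$ could be lengthened; in particular $v_0$ has neighbours $v_{a_1},\dots,v_{a_d}$ with $1=a_1<a_2<\cdots<a_d$ and $d\ge 2q$. For any $i<j$ the closed walk $v_0v_{a_i}v_{a_i+1}\cdots v_{a_j}v_0$ is a cycle of length $a_j-a_i+2$, which is \emph{even} because $H$ is bipartite; hence all the $a_i$ share a parity, and the girth bound applied to consecutive indices gives $a_{i+1}-a_i\ge g-2$. So from $v_0$ alone we already obtain a family of even cycles whose lengths lie in $\{a_j-a_i+2:\ i<j\}$, with the successive ``anchors'' $a_1<\cdots<a_d$ spaced at least $g-2$ apart. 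Pósa-type rotations of $P$ (which keep $P$ longest while moving an endpoint to $v_{a_i-1}$) supply, in exactly the same way, further rich endpoints and thus more cycles; this is what makes the final count tight.

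The crux is then to show these cycle lengths actually contain $(g/2-1)q$ \emph{consecutive} even integers rather than an arbitrary set with gaps. Here I would argue that a gap $v_{a_i},v_{a_i+1},\dots,v_{a_{i+1}}$ is not inert: each interior vertex has degree at least $2q$, and such an edge — or a rotation of $P$ anchored there — can be spliced in to reroute the subpath $v_0\to v_{a_i}\to\cdots$ and close it back through $v_0$, producing cycles whose lengths sweep through the gap. Bipartiteness forces each such length to move by an even amount, while the girth forbids any rerouting detour of length less than $g$, so the sweep has resolution $g/2-1$ rather than $1$; carrying this out across $q$ of the at least $2q-1$ gaps and checking (again by an exchange/rotation argument) that the resulting $q$ blocks of $g/2-1$ consecutive even lengths abut yields $q(g/2-1)$ consecutive even lengths. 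I expect this last step — converting ``many cycles, anchors spaced $\ge g-2$ apart'' into ``$(g/2-1)q$ genuinely consecutive even lengths'' — to be the main obstacle, since it is exactly where the girth (lower bound on detour length) and the minimum degree (number of independent detours) must be combined, and it needs careful rotation/rerouting bookkeeping rather than a one-line estimate; the reduction and the extraction of the rich path are routine.
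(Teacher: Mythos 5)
First, a point of reference: this theorem is stated in the paper only as a quoted result of Verstra\"ete \cite{V}; the paper contains no proof of it, so there is no in-paper argument to compare yours against, and I can only judge the proposal on its own terms. Your opening reduction is correct and standard: average degree at least $4q$ means $|E(G)|\ge 2q|V(G)|$, iteratively deleting vertices of current degree below $2q$ removes fewer than $2q$ edges per deleted vertex, so a nonempty subgraph $H$ with $\delta(H)\ge 2q$ survives, and bipartiteness and the girth lower bound are inherited. The longest-path setup is also sound as far as it goes: every neighbour $v_{a_1},\dots,v_{a_d}$ of the endpoint $v_0$ lies on $P$, all $a_i$ have the same parity, each pair $i<j$ yields an even cycle of length $a_j-a_i+2$, and the girth forces $a_{i+1}-a_i\ge g-2$.

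The genuine gap is the ``sweep'' step, and the spacing bound you just derived is precisely what makes it fatal rather than helpful: the cycle lengths obtained directly from $v_0$ are spaced at least $g-2$ apart, so for $g>4$ this family contains no two consecutive even lengths at all, and the entire content of the theorem is deferred to an unproved claim. Rerouting a subpath between two fixed vertices changes its length by an even amount, but nothing you invoke controls that amount: two internally disjoint $u$--$w$ paths satisfy only $\ell_1+\ell_2\ge g$, which bounds their sum and says nothing about their difference, so there is no mechanism forcing every intermediate even length to be realized. More tellingly, your heuristic has the role of the girth inverted. You argue that large girth forces long detours and hence a coarse ``resolution'' of $g/2-1$, whereas the theorem asserts that larger girth yields \emph{longer} runs of consecutive even lengths (the quantity $(g/2-1)q$ increases with $g$); a perturbation-of-a-longest-path argument of the kind you sketch has no visible way to produce that dependence. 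Indeed, Verstra\"ete's proof does not use P\'osa rotation at all: after the same degree reduction it analyses a breadth-first-search partition and applies a lemma that directly yields many consecutive (even) cycle lengths through a fixed edge of a suitable subgraph of minimum degree at least two, with the factor $g/2-1$ arising from the number of levels such a structure must span. As written, your proposal establishes only the easy reduction and a sparse set of even cycle lengths; the stated conclusion is not reached.
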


Unfortunately, these results still do not guarantee the existence
of a cycle of length $4k+2$ for some small fixed  $k$ in dense subgraphs of a hypercube.
For completeness, we include a general bound on the maximum number of edges in
 a $C_{4k+2}$-free subgraph of a hypercube.

\begin{theorem}
$f(n, C_{4k+2}) \leq (1+o(1))\frac{1}{\sqrt 2} n2^{n-1},$  $k\geq
1$.
 \end{theorem}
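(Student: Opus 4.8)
The plan is to combine a standard counting bound for $C_{2m}$-free bipartite graphs with the layered structure of $Q_n$. Let $G$ be a subgraph of $Q_n$ with no cycle of length $4k+2$; write $e=|E(G)|$. The hypercube is bipartite with the two sides being the even-weight and odd-weight vertices, so $G$ is bipartite. The key classical input is the Bondy--Simonovits / Kővári--Sós--Turán-type estimate: a bipartite graph on $N$ vertices with no $C_{2m}$ has at most $\tfrac12\bigl((m-1)^{1/m} N^{1+1/m} + N\bigr)$ edges, or more crudely $O_m\!\left(N^{1+1/m}\right)$ edges. Applying this directly to $G\subseteq Q_n$ with $N=2^n$ and $2m=4k+2$ only gives $e=O\!\left(2^{n(1+1/(2k+1))}\right)$, which is far worse than the claimed $(1+o(1))\tfrac{1}{\sqrt2}n2^{n-1}$. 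So a naive global application is too lossy; the bound $\tfrac{1}{\sqrt2}n2^{n-1}$ is roughly $\tfrac{1}{\sqrt2}|E(Q_n)|$, i.e.\ a constant-factor saving, not a polynomial one.

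The right approach is to exploit that $Q_n$ decomposes into $n$ smaller cubes in each coordinate direction, or — more usefully here — to localize. Fix a coordinate $i\in[n]$ and split $Q_n$ along it into two copies of $Q_{n-1}$, call them $Q^0$ and $Q^1$, joined by a perfect matching $M_i$ (the direction-$i$ edges). Summing over all $i$, every non-matching edge of $Q_n$ gets counted, and one sets up a recursion or an averaging argument on $f(n,C_{4k+2})$. The cleaner route, which I expect the authors use, is the observation that a quadrilateral-free (hence certainly $C_{4k+2}$-free after a short argument, or by a separate direct argument) subgraph has bounded edges, and that $C_{4k+2}$-freeness forces a density deficiency in a positive fraction of the $2$-dimensional subcubes or in the link of each vertex. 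Concretely: for each vertex $v$, consider its neighborhood; a $C_{4k+2}$-free condition near $v$ restricts how many pairs of neighbors can be "closed up". Counting paths of length $2k+1$ and using that each such path lies in at most one $C_{4k+2}$ through a fixed edge gives, after the dust settles, that $e^{2}/(n2^n) = O(e)$ up to the $\tfrac1{\sqrt2}$ constant — i.e.\ a bound of the form $e \le (1+o(1))\,\alpha\, n2^{n-1}$ with $\alpha=1/\sqrt2$ emerging from optimizing the trade-off between the number of short paths forced by high density and the number permitted by $C_{4k+2}$-freeness. The factor $\tfrac1{\sqrt2}$ strongly suggests a bound of the shape "either $G$ has at most half the edges in most subcubes, or it contains the forbidden cycle," squared-and-rooted.

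The main obstacle is getting the constant down to exactly $1/\sqrt2$ rather than some larger absolute constant: this requires choosing the localization unit (pairs of directions, i.e.\ $2$-dimensional subcubes $Q_2$, of which $Q_n$ has $\binom n2 2^{n-2}$) so that the local constraint is tight. The step I expect to be delicate is verifying that a $C_{4k+2}$-free subgraph cannot contain "too many" full copies of some small dense configuration — one needs that forbidding a single even cycle length $4k+2$ (with $k\ge1$) already forbids enough $2$-dimensional or $4$-dimensional substructures to trigger the counting bound. For $k=1$ this is the Erdős $C_6$ problem territory, so the bound here is necessarily weaker than $\tfrac12$; the $\tfrac1{\sqrt2}$ reflects exactly the loss from using a Cauchy--Schwarz / convexity step rather than an exact extremal result. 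I would therefore: (1) reduce to a statement about $2$-dimensional subcubes via double counting direction-pairs; (2) in each $Q_2$, bound the contribution using the $C_{4k+2}$-free hypothesis together with how $Q_2$'s sit inside longer cycles; (3) apply Cauchy--Schwarz/Jensen across all $\binom n2 2^{n-2}$ subcubes to convert the per-subcube bound into the global $(1+o(1))\tfrac1{\sqrt2}n2^{n-1}$, absorbing lower-order terms into the $o(1)$.
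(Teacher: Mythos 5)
You correctly rule out the naive global Bondy--Simonovits bound, and one of the several directions you sketch (the link of each vertex, counting paths, a Cauchy--Schwarz step) is indeed where the paper's argument lives. But the proposal never isolates the one lemma that makes the count work, and without it neither the constant $1/\sqrt 2$ nor any nontrivial bound follows. The paper's argument is: for each $v\in V(Q_n)$ form an auxiliary graph $H_v$ on the $n$ neighbors of $v$, joining $u$ and $w$ when $G$ contains a $2$-path $uxw$ with $x\neq v$. Since two vertices at distance $2$ in $Q_n$ have exactly two common neighbors, such an $x$ is unique, and distinct edges of $H_v$ yield distinct midpoints. Hence a $(2k+1)$-cycle in $H_v$ lifts to a $(4k+2)$-cycle in $G$, so each $H_v$ is $C_{2k+1}$-free and therefore has at most $n^2/4$ edges (the extremal number for a fixed odd cycle, for $n$ large). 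Summing, the number of $2$-paths in $G$, which is $\sum_v\binom{d_v}{2}$, is at most $2^n n^2/4$, because every $2$-path $uv'w$ is charged to the unique other common neighbor $v$ of $u$ and $w$. Cauchy--Schwarz then gives $2^{-n-1}(2e)^2-e\le 2^n n^2/4$, i.e.\ $e\le(1+o(1))\frac{1}{\sqrt 2}n2^{n-1}$.

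Your version has two concrete gaps. First, you propose counting paths of length $2k+1$, and alternatively reducing to $2$-dimensional subcubes with a double count over direction pairs; neither is developed, and the subcube route does not obviously see the forbidden cycle at all, since a $C_{4k+2}$ with $k\ge 1$ is not confined to a $Q_2$ (or $Q_4$) in any way you exploit. The step that actually converts $C_{4k+2}$-freeness into a usable local density constraint is the $C_{2k+1}$-freeness of the auxiliary graphs $H_v$, and that idea is absent. Second, the inequality you predict, $e^2/(n2^n)=O(e)$, is the wrong shape: it only says $e=O(n2^n)$, which is trivially true of every subgraph of $Q_n$ and carries no constant. The correct inequality is quadratic in $e$ with the $2^n n^2/4$ on the other side, and the $1/\sqrt 2$ comes precisely from the $n^2/4$ bound on each $|E(H_v)|$ combined with the factor $2$ lost in Cauchy--Schwarz --- not from a tight analysis of two-dimensional subcubes.
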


\begin{proof} Let $k\geq 1$ be an integer and let $G$ be a subgraph
of $Q_n$ containing no cycles of length $4k+2$.


Let $d_v$ be the degree of a vertex $v$ in $G$. For each $v\in
V(Q_n)$, we introduce a graph $H_v=(V,E)$ such that $V = \{ u\in
V(Q_n) : uv \in E(Q_n) \}$ and  $E = \{ \{u,w\}: \mbox { there is a }
2-\mbox {path from } u \mbox { to } w \mbox { in } G \mbox {
different from } uvw \}.$

If $uw, u'w'$ are two distinct edges in $H_v$ ($u$ might coincide with $u'$), then
there is a unique $(u-w)$-path $uxw$, $x\neq v$ and a unique $(u'-w')$-path
$u'x'w'$, $x'\neq v$ in $G$, moreover $x\neq x'$. We note first
that $H_v$ does not have $C_{2k+1}$.  To that end, consider $u_1,
\ldots, u_{2k+1}, u_1$, a cycle in $H_v$. The previous observation
implies that there is a cycle $u_1, x_1, u_2, x_2, \ldots,
u_{2k+1}, x_{2k+1}, u_1$ in $G$, a contradiction.  If $n$ is sufficiently large, then
any graph on $n$ vertices with no copy of $C_{2k+1}$ has at most
$n^2/4$ edges, see for example \cite{B}. Hence, $H_v$ has at most $n^2/4$ edges, for $n$
large enough.

Then we have
\begin{equation} \label{H_v}
\sum_{v\in V(Q_n)} |E(H_v)| \leq 2^n n^2/4.
\end{equation}



Next, we count $p(G)$, the number of paths of length two in $G$ in two ways.
Trivially,
$$p(G)= \sum_{v\in V(Q_n)} \binom{d_v}{2}.$$
On the other hand,
$$p(G) = \sum_{v\in V(Q_n)}|E(H_v)|, $$
because if $uv'w$ is a path of length two in $G$ then $\{u,w\}\in E(H_v)$ for the unique $v$, $v\neq v'$,
which is adjacent to both $u$ and $w$ in $Q_n$.
Thus, we have
\begin{equation} \label{sums_eq}
\sum_{v\in V(Q_n)} \binom{d_v}{2} =\sum_{v\in V(Q_n)}|E(H_v)|.
\end{equation}

Using the Cauchy-Schwarz inequality, we have
\begin{eqnarray}
\sum_{v\in V(Q_n)} \binom{d_v}{2} & = &
\sum_{v\in V(Q_n)} (d_v^2/2 - d_v/2) \nonumber \\
& = & \left( \sum_{v\in V(Q_n)} d_v^2/2\right) - |E(G)|
\nonumber \\
& \geq & 2^{-n-1}  \left(\sum_{v\in V(Q_n)} d_v \right)^2 - |E(G)|
\nonumber \\
& = & 2^{-n-1} (2|E(G)|)^2 - |E(G)|. \label{d_v'}
\end{eqnarray}

Combining (\ref{H_v}), (\ref{sums_eq}), and (\ref{d_v'}),  we
have
$$ 2^{-n-1} (2|E(G)|)^2 - |E(G)| \leq 2^n n^2/4.$$
 
If $|E(G)|= an2^n$, then
$a\leq \frac {1}{2n} + \frac{1}{2} \sqrt { 2 + 1/n^2} \leq
1/\sqrt {2}+ \epsilon$, where $\epsilon \rightarrow 0$ as $n \rightarrow \infty$.
\end{proof}

\noindent
{\bf Acknowledgments. } The authors  thank  the referees for
careful reading and helpful suggestions which improved the presentation of the results.

\end{document}